\documentclass[11pt]{article}
\usepackage{amsfonts, amssymb, amsmath, amsthm, xcolor, graphicx, pstricks, pstricks-add}
\oddsidemargin 0pt \evensidemargin 0pt \marginparwidth 40pt
\marginparsep 10pt \topmargin 0pt \headsep 10pt \textheight 9in
\textwidth 6.7in \setlength{\parskip}{1ex plus 0.5ex minus 0.2ex}
\pagestyle{plain}
\begin{document}

\theoremstyle{plain}
\newtheorem{thm}{Theorem}[section]
\newtheorem{cor}[thm]{Corollary}
\newtheorem{con}[thm]{Conjecture}
\newtheorem{cla}[thm]{Claim}
\newtheorem{lm}[thm]{Lemma}
\newtheorem{prop}[thm]{Proposition}
\newtheorem{example}[thm]{Example}

\theoremstyle{definition}
\newtheorem{dfn}[thm]{Definition}
\newtheorem{alg}[thm]{Algorithm}
\newtheorem{prob}[thm]{Problem}
\newtheorem{rem}[thm]{Remark}

\renewcommand{\baselinestretch}{1.1}

\title{\bf On AZ-style identity}
\author{
Cheng Yeaw Ku
\thanks{ Department of Mathematics, National University of Singapore, Singapore 117543. E-mail: matkcy@nus.edu.sg} \and Kok Bin Wong \thanks{
Institute of Mathematical Sciences, University of Malaya, 50603 Kuala Lumpur, Malaysia. E-mail:
kbwong@um.edu.my.} } \maketitle

\begin{abstract}\noindent
The AZ identity is a generalization of the LYM-inequality. In this paper, we will give a generalization of the AZ identity.
\end{abstract}

\bigskip\noindent
{\sc keywords:} LYM-inequality, AZ-identity\newline
\noindent
{\sc \small Mathematics Subject Classification: 05D05}

\section{Introduction}
 Let $[n]=\{1,2,\dots, n\}$, $\Omega_n$ be the family of all subsets of $[n]$, and $\varnothing$ be the empty set. 
 Let $\varnothing\neq \mathcal F\subseteq \Omega_n$. If $A\nsubseteq B$ for all $A,B\in\mathcal F$ with $A\neq B$, then $\mathcal F$ is called a \emph{Sperner family} or \emph{antichain}. For any antichain $\mathcal F$, the following inequality holds:
 \begin{equation}
 \sum_{X\in\mathcal F} \frac{1}{\binom{n}{\vert X\vert}}\leq 1.\tag{1}
 \end{equation}
The inequality (1) is called the LYM-inequality (Lubell, Yamamoto, Meshalkin) (see \cite[Chapter 13]{Bollo}).
Many generalizations of the LYM-inequality have been obtained (see \cite{Bey, CG, EFK, ES}). In particular, Ahlswede and Zhang \cite{AZ} discovered an identity (see equation (2)) in which the LYM-inequality is a consequence of it. 

Let $\mathbf G_n$ be the family of all $\mathcal F$ such that $\varnothing\neq \mathcal F\subseteq \Omega_n$. For every $\mathcal F\in\mathbf G_n$, the set
\begin{equation}
D_n(\mathcal F)=\{ Y\subseteq [n]\ :\ Y\subseteq F\ \textnormal{for some $F\in\mathcal F$}\},\notag
\end{equation}
is called the \emph{downset}, while the set
\begin{equation}
U_n(\mathcal F)=\{ Y\subseteq [n]\ :\ Y\supseteq F\ \textnormal{for some $F\in\mathcal F$}\},\notag
\end{equation}
is called the \emph{upset}. For each $X\subseteq [n]$, we set
\begin{equation}
Z_{\mathcal F}(X)=\begin{cases}
\varnothing & \ \textnormal{if $X\notin U_n(\mathcal F)$},\\
\bigcap_{F\in\mathcal F, F\subseteq X} F &\ \textnormal{otherwise}.
\end{cases}\notag
\end{equation}

\begin{thm}\label{thm_AZ_identity}\textnormal{\cite{AZ}} For any $\mathcal F\in \mathbf G_n$ with $\varnothing\notin \mathcal F$, 
\begin{equation}
\sum_{X\in U_n(\mathcal F)} \frac{\vert Z_{\mathcal F}(X)\vert}{\vert X\vert\binom{n}{\vert X\vert}}=1.\tag{2}
\end{equation}\qed
\end{thm}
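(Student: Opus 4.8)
The plan is to prove the identity by counting maximal chains in the Boolean lattice, equivalently permutations of $[n]$. For $\sigma\in S_n$ let $X_k^\sigma=\{\sigma(1),\dots,\sigma(k)\}$, so that $\varnothing=X_0^\sigma\subset X_1^\sigma\subset\cdots\subset X_n^\sigma=[n]$ is the maximal chain determined by $\sigma$, and $\sigma(k)$ is the element adjoined at the $k$-th step. The starting observation is the elementary count: for a fixed set $X$ with $|X|=k$ and a fixed $i\in X$, the number of $\sigma\in S_n$ with $X_k^\sigma=X$ and $\sigma(k)=i$ equals $(k-1)!(n-k)!$. Since $(k-1)!(n-k)!/n!=1/(k\binom{n}{k})$, each summand rewrites as
\begin{equation}
\frac{|Z_{\mathcal F}(X)|}{|X|\binom{n}{|X|}}=\frac{1}{n!}\,\#\{\sigma\in S_n:\ X_{|X|}^\sigma=X\ \text{and}\ \sigma(|X|)\in Z_{\mathcal F}(X)\}.\notag
\end{equation}
Summing over $X\in U_n(\mathcal F)$, the identity (2) becomes equivalent to the assertion that the total count on the right is exactly $n!$; that is, I must show that for every $\sigma\in S_n$ there is exactly one index $k$ with $X_k^\sigma\in U_n(\mathcal F)$ and $\sigma(k)\in Z_{\mathcal F}(X_k^\sigma)$. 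I will call such an index a \emph{firing} index.

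To prove this ``exactly one'' statement, I would first note that membership in the upset is monotone along the chain: once $X_k^\sigma\supseteq F$ for some $F\in\mathcal F$, all larger sets contain $F$ too. Because $\varnothing\notin\mathcal F$ forces $\varnothing\notin U_n(\mathcal F)$, while $[n]\in U_n(\mathcal F)$ (any $F\in\mathcal F$ satisfies $F\subseteq[n]$), there is a well-defined first-entry index $j=j(\sigma)$ with $X_{j-1}^\sigma\notin U_n(\mathcal F)$ and $X_k^\sigma\in U_n(\mathcal F)$ for all $k\ge j$. For $k<j$ we have $Z_{\mathcal F}(X_k^\sigma)=\varnothing$ by definition, so no such $k$ can fire.

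Next I would show that $k=j$ always fires. Every $F\in\mathcal F$ with $F\subseteq X_j^\sigma$ must contain the new element $\sigma(j)$, for otherwise $F\subseteq X_{j-1}^\sigma$, contradicting $X_{j-1}^\sigma\notin U_n(\mathcal F)$. Hence $\sigma(j)$ lies in every such $F$, so $\sigma(j)\in Z_{\mathcal F}(X_j^\sigma)$. The crux---and the step I expect to be the main obstacle---is to rule out firing at every $k>j$. Here the point is that $X_{k-1}^\sigma$ already lies in the upset, so it contains some $F'\in\mathcal F$ with $F'\subseteq X_{k-1}^\sigma\subseteq X_k^\sigma$; thus $Z_{\mathcal F}(X_k^\sigma)\subseteq F'\subseteq X_{k-1}^\sigma$. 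But $\sigma(k)\notin X_{k-1}^\sigma$, so $\sigma(k)\notin Z_{\mathcal F}(X_k^\sigma)$ and $k$ does not fire. Combining the three cases, exactly the index $j$ fires for each $\sigma$, the right-hand count is $n!$, and (2) follows. The only care needed is to confirm that $Z_{\mathcal F}$ is antitone along the chain and that its definition as an intersection interacts correctly with these containments; both reduce to the fact that enlarging $X$ only enlarges the family $\{F\in\mathcal F:F\subseteq X\}$ over which the intersection is taken.
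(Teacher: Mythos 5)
Your proof is correct, but it takes a genuinely different route from the paper. The paper never proves Theorem \ref{thm_AZ_identity} on its own: the theorem is quoted from \cite{AZ}, and equation (2) is recovered only at the end of Section 2, as the special case $a=1$, $m=0$ (Lemma \ref{lm_special_case}(b)) of the weighted identity (6) in Theorem \ref{thm_main1}. The proof there is algebraic and inductive: for a single set $\mathcal A=\{A\}$ the sum telescopes down from $n$ to $\vert A\vert$ via the recursion $g_{a,m}(n,l)+g_{a,m}(n,l+1)=g_{a,m}(n-1,l)$ (Lemma \ref{lm_first_link}), and the passage to several sets is an induction on $q=\vert\mathcal A\vert$ powered by the inclusion--exclusion relation $\vert Z_{\mathcal A\cup\mathcal B}(X)\vert=\vert Z_{\mathcal A}(X)\vert+\vert Z_{\mathcal B}(X)\vert-\vert Z_{\mathcal A\vee\mathcal B}(X)\vert$ (Lemma \ref{lm_set_thu}). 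You instead give the classical double count over maximal chains: writing each summand as $\frac{1}{n!}\,\#\{\sigma\in S_n:\ X^\sigma_{\vert X\vert}=X,\ \sigma(\vert X\vert)\in Z_{\mathcal F}(X)\}$, the identity becomes the statement that every permutation fires exactly once, and your three cases (no firing before the chain enters $U_n(\mathcal F)$; firing at the first-entry index $j$ because any $F\in\mathcal F$ with $F\subseteq X^\sigma_j$ must contain $\sigma(j)$; no firing at $k>j$ because $Z_{\mathcal F}(X^\sigma_k)\subseteq F'\subseteq X^\sigma_{k-1}$ misses $\sigma(k)$) are all airtight, including the counting lemma $(k-1)!(n-k)!/n!=1/\bigl(k\binom{n}{k}\bigr)$. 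The trade-off: your argument is self-contained, combinatorially transparent, and explains why the sum is exactly $1$; but the counting interpretation collapses for the real parameters $a,m$ of Theorem \ref{thm_main1}, since the weights $\left(a\vert Z_{\mathcal A}(X)\vert+m\right)g_{a,m}(n,\vert X\vert)$ need not count anything, so the paper's algebraic induction is what buys the generalization (6), at the price of obtaining (2) only as a corollary.
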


Equation (2) is called the \emph{AZ-identity}. Note that when $\mathcal F$ is an antichain, $Z_{\mathcal F}(F)=F$ for all $F\in\mathcal F$. So equation (2) becomes
\begin{equation}
\sum_{F\in\mathcal F} \frac{1}{\binom{n}{\vert F\vert}}+\sum_{X\in U_n(\mathcal F)\setminus \mathcal F} \frac{\vert Z_{\mathcal F}(X)\vert}{\vert X\vert\binom{n}{\vert X\vert}}=1,\notag
\end{equation}
and as the second term on the left is non-negative, we obtain inequality (1).

Later, Ahlswede and Cai discovered an identity  for two set systems.

\begin{thm}\label{thm_AZ_identity2}\textnormal{\cite{AC1}} Let $\mathcal A=\{A_1,A_2,\dots, A_q\}$ and $\mathcal B=\{B_1,B_2,\dots, B_q\}$ be elements in $\mathbf G_n$. Suppose that $A_i\neq\varnothing$ for all $i$, and $A_j\subseteq B_k$ if and only if $j=k$. Then
\begin{equation}
\sum_{i=1}^q \frac{1}{\binom{n-\vert B_i\vert+\vert A_i\vert}{\vert A_i\vert}}+\sum_{X\in U_n(\mathcal A)\setminus D_n(\mathcal B)} \frac{\vert Z_{\mathcal A}(X)\vert}{\vert X\vert\binom{n}{\vert X\vert}}=1.\tag{3}
\end{equation}\qed
\end{thm}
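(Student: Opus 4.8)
The plan is to prove the identity with a uniformly random maximal chain, extending the probabilistic reading of the AZ-identity. Let $\pi$ be a uniformly random permutation of $[n]$ and set $C_m=\{\pi(1),\dots,\pi(m)\}$, giving a maximal chain $\varnothing=C_0\subset C_1\subset\cdots\subset C_n=[n]$. Since every $A_i\neq\varnothing$ we have $A_1\subseteq[n]$, so $[n]\in U_n(\mathcal A)$ and the first-entry time $\tau=\min\{m:\ C_m\in U_n(\mathcal A)\}$ is defined on every chain. Writing $X=C_\tau$ for the first-entry set, the events $\{C_\tau=X\}$ with $X\in U_n(\mathcal A)$ partition the sample space, so their probabilities sum to $1$; the proof consists in evaluating these probabilities and separating the sets $X$ according to whether $X\in D_n(\mathcal B)$.

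First I would record the one-family computation behind Theorem~\ref{thm_AZ_identity}. For $X\in U_n(\mathcal A)$ the event $C_\tau=X$ occurs precisely when $C_{|X|}=X$ and the last element $\pi(|X|)$ lies in $Z_{\mathcal A}(X)$: indeed $U_n(\mathcal A)$ is an up-set, and $X\setminus\{y\}\notin U_n(\mathcal A)$ if and only if $y$ belongs to every member of $\mathcal A$ contained in $X$, that is, $y\in Z_{\mathcal A}(X)$. Hence the probability of $C_\tau=X$ equals $|Z_{\mathcal A}(X)|/(|X|\binom{n}{|X|})$, and summing over those $X\in U_n(\mathcal A)$ with $X\notin D_n(\mathcal B)$ yields exactly the second sum in $(3)$.

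The core of the argument is the complementary mass, where $X=C_\tau\in D_n(\mathcal B)$. Here the hypothesis $A_j\subseteq B_k$ iff $j=k$ is used decisively: if $X\in U_n(\mathcal A)$ and $X\subseteq B_k$, then $A_j\subseteq X\subseteq B_k$ for some $j$ forces $j=k$, so $A_k\subseteq X\subseteq B_k$ with a unique index $k$ and $Z_{\mathcal A}(X)=A_k$. Thus $\{C_\tau\in D_n(\mathcal B)\}$ is the disjoint union of the events $E_k=\{A_k\subseteq C_\tau\subseteq B_k\}$. I claim $E_k$ holds if and only if in $\pi$ every element of $A_k$ precedes every element of $[n]\setminus B_k$. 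For the forward direction $A_k\subseteq C_\tau$ and $C_\tau\subseteq B_k$ place $A_k$ in positions $\le\tau$ and $[n]\setminus B_k$ in positions $>\tau$; conversely, if $\tau'$ is the position of the last element of $A_k$ and no element of $[n]\setminus B_k$ precedes it, then $A_k\subseteq C_{\tau'}\subseteq B_k$, while for $m<\tau'$ any $A_j\subseteq C_m\subseteq B_k$ would force $j=k$ and hence $A_k\subseteq C_m$, impossible, so $\tau=\tau'$. Since $A_k$ and $[n]\setminus B_k$ are disjoint of sizes $|A_k|$ and $n-|B_k|$, the probability that all of $A_k$ comes first is $|A_k|!\,(n-|B_k|)!/(n-|B_k|+|A_k|)!=1/\binom{n-|B_k|+|A_k|}{|A_k|}$, the $k$-th term of the first sum. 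Adding the disjoint contributions of the $E_k$ to the second sum accounts for the full probability $1$, which is $(3)$.

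I expect the main obstacle to be the characterization of $E_k$, where one must verify both that no competing $A_j$ can be completed before $A_k$ inside $B_k$ (exactly the point at which $A_j\subseteq B_k\iff j=k$ is invoked) and that the events $E_k$ are disjoint and exhaust $\{C_\tau\in D_n(\mathcal B)\}$; once this is in place, the two probability evaluations are routine.
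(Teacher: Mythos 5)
Your proof is correct, but it takes a genuinely different route from the paper. The paper never proves equation (3) directly: it cites it from \cite{AC1} and recovers it as the specialization $a=1$, $m=0$ (via Lemma \ref{lm_special_case}) of the weighted identity (7) in Theorem \ref{thm_main2}, which is in turn proved by induction on $q$ --- splitting $\mathcal A$ into $\{A_1,\dots,A_{q-1}\}$ and $\{A_q\}$, applying the decomposition of Lemma \ref{lm_Thu2} together with Theorem \ref{thm_main1}, and settling the base case $q=1$ with the telescoping sum of Lemma \ref{lm_pre_main2}. You instead argue probabilistically with a uniformly random maximal chain: (3) becomes the statement that the first-entry events partition the probability space, with $\Pr[C_\tau=X]=\vert Z_{\mathcal A}(X)\vert/\bigl(\vert X\vert\binom{n}{\vert X\vert}\bigr)$ accounting for the second sum, and the events $E_k=\{A_k\subseteq C_\tau\subseteq B_k\}$ --- shown, using the hypothesis $A_j\subseteq B_k$ iff $j=k$, to be disjoint, to exhaust $\{C_\tau\in D_n(\mathcal B)\}$, and to have probability $1/\binom{n-\vert B_k\vert+\vert A_k\vert}{\vert A_k\vert}$ --- accounting for the first. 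Your steps check out: the identification of $\{C_\tau=X\}$ with $\{C_{\vert X\vert}=X,\ \pi(\vert X\vert)\in Z_{\mathcal A}(X)\}$ is valid because $U_n(\mathcal A)$ is an up-set and all $A_i$ are non-empty, and your order-theoretic characterization of $E_k$ is exactly where the cross-containment hypothesis does its work (both for uniqueness of $k$ and for ruling out an earlier entry inside $B_k$). As for what each approach buys: yours is self-contained and gives every term of (3) a transparent meaning (the chain either first enters $U_n(\mathcal A)$ through one of the windows $[A_k,B_k]$ or outside $D_n(\mathcal B)$); the paper's inductive-algebraic route is less direct for (3) itself, but it yields the two-parameter identity (7) for arbitrary real $a,m$, where the weights $\left(a\vert Z_{\mathcal A}(X)\vert+m\right)g_{a,m}(n,\vert X\vert)$ no longer admit a probability interpretation, so your chain argument would not extend to that generality as stated.
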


Ahlswede and Cai \cite{AC2} also discovered AZ type of identities of several other posets. For the duality of equations (2) and (3), we refer the readers to \cite{Day, HS}.

Recently, Thu discovered the following generalizations of equations (2) and (3).

\begin{thm}\label{thm_AZ_identity1_general}\textnormal{\cite{Thu3}} Let $m$ be an integer, and $\mathcal F\in \mathbf G_n$ with $\varnothing\notin \mathcal F$. If $\vert F\vert +m> 0$ for all $F\in\mathcal F$, then
\begin{equation}
\sum_{X\in U_n(\mathcal F)} \frac{\vert Z_{\mathcal F}(X)\vert+m}{(\vert X\vert+m)\binom{n+m}{\vert X\vert+m}}=1.\tag{4}
\end{equation}\qed
\end{thm}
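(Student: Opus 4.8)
The plan is to reduce the case $m\ge 0$ to the original AZ-identity (Theorem~\ref{thm_AZ_identity}) by a ``padding'' construction, and then to extend to negative $m$ by recognising the left-hand side of (4) as a rational function of $m$.

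First, for $m\ge 0$, I would adjoin $m$ new ground-set elements $D=\{n+1,\dots,n+m\}$ and pass from $\mathcal F$ on $[n]$ to $\mathcal F'=\{F\cup D : F\in\mathcal F\}$ on $[n+m]$, together with the map $X\mapsto X'=X\cup D$. The key verifications are that $X\mapsto X'$ is a bijection from $U_n(\mathcal F)$ onto $U_{n+m}(\mathcal F')$ --- here one uses that every member of $\mathcal F'$ contains $D$, so every element of $U_{n+m}(\mathcal F')$ contains $D$ and is therefore of the form $X\cup D$ --- and that $Z_{\mathcal F'}(X')=Z_{\mathcal F}(X)\cup D$, the latter because $F\cup D\subseteq X\cup D$ is equivalent to $F\subseteq X$. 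Granting these, $|Z_{\mathcal F'}(X')|=|Z_{\mathcal F}(X)|+m$ and $|X'|=|X|+m$, so term-by-term
\begin{equation}
\frac{|Z_{\mathcal F}(X)|+m}{(|X|+m)\binom{n+m}{|X|+m}}=\frac{|Z_{\mathcal F'}(X')|}{|X'|\binom{n+m}{|X'|}},\notag
\end{equation}
and summing over the bijection turns the left-hand side of (4) into the left-hand side of the AZ-identity for $\mathcal F'\in\mathbf G_{n+m}$, which satisfies $\varnothing\notin\mathcal F'$ since each $F\cup D\supseteq D\neq\varnothing$ when $m\ge 1$ (and the case $m=0$ is (2) itself). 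Theorem~\ref{thm_AZ_identity} then yields the value $1$.

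To reach negative $m$, I would write $k=|X|$ and observe that each summand equals
\begin{equation}
\frac{(|Z_{\mathcal F}(X)|+m)(n-k)!}{(k+m)(k+m+1)\cdots(n+m)},\notag
\end{equation}
which is a rational function of $m$; hence the whole left-hand side of (4), call it $P(m)$, is a rational function of $m$. Since $|X|\ge t:=\min_{F\in\mathcal F}|F|$ for every $X\in U_n(\mathcal F)$, the denominators are nonzero (indeed positive) for every real $m>-t$, which is exactly the admissible region $|F|+m>0$; so $P$ is regular there. The previous paragraph shows $P(m)=1$ at every integer $m\ge 0$, infinitely many points, so the polynomial obtained by clearing denominators in $P(m)-1$ has infinitely many roots and thus vanishes identically. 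Therefore $P(m)=1$ for all $m>-t$, in particular for the admissible negative integers.

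The main obstacle is the bijective bookkeeping in the first step: one must check cleanly that padding by $D$ respects not only the cardinalities but also the operators $U_n(\cdot)$ and $Z_{\mathcal F}(\cdot)$, so that (4) becomes a genuine instance of (2). Once that correspondence is in place the identity for $m\ge 0$ is immediate, and the passage to negative $m$ is the routine principle that a rational function agreeing with a constant at infinitely many points is that constant, needing only the remark that the range $m>-t$ avoids all poles.
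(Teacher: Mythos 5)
Your proof is correct, but it follows a genuinely different route from the paper's. The paper never invokes the original AZ identity (2) in deriving (4): it proves the two-parameter identity (6) of Theorem \ref{thm_main1} from scratch --- the singleton case by repeatedly stripping the top element of the ground set via the recursion $g_{a,m}(n,l)+g_{a,m}(n,l+1)=g_{a,m}(n-1,l)$ of Lemma \ref{lm_first_link}, the general case by induction on the number of sets via the inclusion--exclusion Lemma \ref{lm_set_thu} --- and then specializes $a=1$ through Lemma \ref{lm_special_case}(a), so that both (2) and (4) fall out as corollaries. You instead take (2) as a black box: your padding bijection $X\mapsto X\cup D$ is verified correctly (every member of $U_{n+m}(\mathcal F')$ contains $D$, and $Z_{\mathcal F'}(X\cup D)=Z_{\mathcal F}(X)\cup D$ because $F\cup D\subseteq X\cup D$ iff $F\subseteq X$), it turns (4) for integer $m\ge 0$ into an instance of (2); and your rational-function step legitimately extends the identity to the admissible negative integers, since every $X\in U_n(\mathcal F)$ has $\vert X\vert\ge t$, so all poles of $P$ lie in $\{-n,-n+1,\dots,-t\}$, and a rational function equal to $1$ at infinitely many points is identically $1$ off its poles. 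Comparing the two: the paper's approach buys a self-contained argument and the more general identity (6) for arbitrary real $a,m$ avoiding the poles; yours buys brevity and the conceptual point that (4) is just (2) in disguise. In fact your argument gives more than you claim: it shows $P(m)=1$ for \emph{every} real $m\notin\{-n,\dots,-t\}$, which is precisely the $a=1$ case of Theorem \ref{thm_main1}, and the case $a\neq 0$ of (6) reduces to $a=1$ upon replacing $m$ by $m/a$, since $(a\vert Z\vert+m)\,g_{a,m}(n,l)=(\vert Z\vert+m/a)\,g_{1,m/a}(n,l)$; so, granting (2), your method essentially recovers the paper's main theorem as well.
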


\begin{thm}\label{thm_AZ_identity2_general}\textnormal{\cite{Thu3}} Let $m$ be an integer, and $\mathcal A=\{A_1,A_2,\dots, A_q\}$ and $\mathcal B=\{B_1,B_2,\dots, B_q\}$ be elements in $\mathbf G_n$. Suppose that $A_i\neq\varnothing$ for all $i$, and $A_j\subseteq B_k$ if and only if $j=k$. If $\vert A\vert +m> 0$ for all $A\in\mathcal A$, then 
\begin{equation}
\sum_{i=1}^q \frac{1}{\binom{n+m-\vert B_i\vert+\vert A_i\vert}{\vert A_i\vert+m}}+\sum_{X\in U_n(\mathcal A)\setminus D_n(\mathcal B)} \frac{\vert Z_{\mathcal A}(X)\vert+m}{(\vert X\vert+m)\binom{n+m}{\vert X\vert+m}}=1.\tag{5}
\end{equation}\qed
\end{thm}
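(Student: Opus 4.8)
\bigskip

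The plan is to deduce (5) from the single-family identity (4), mirroring the way (3) is obtained from (2). First I would apply Theorem~\ref{thm_AZ_identity1_general} to the family $\mathcal A$ by itself; this is permissible because the hypotheses of that theorem, namely $A_i\neq\varnothing$ for all $i$ and $\vert A\vert+m>0$ for all $A\in\mathcal A$, are exactly what we are given. This yields
\[
\sum_{X\in U_n(\mathcal A)}\frac{\vert Z_{\mathcal A}(X)\vert+m}{(\vert X\vert+m)\binom{n+m}{\vert X\vert+m}}=1 .
\]
Writing $U_n(\mathcal A)=\bigl(U_n(\mathcal A)\setminus D_n(\mathcal B)\bigr)\sqcup\bigl(U_n(\mathcal A)\cap D_n(\mathcal B)\bigr)$ and recognising that the first block reproduces the second sum in (5), it remains to show that the $U_n(\mathcal A)\cap D_n(\mathcal B)$ block equals $\sum_{i=1}^q \frac{1}{\binom{n+m-\vert B_i\vert+\vert A_i\vert}{\vert A_i\vert+m}}$.

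Second, I would pin down the structure of $U_n(\mathcal A)\cap D_n(\mathcal B)$ using the cross condition $A_j\subseteq B_k\iff j=k$. If $X$ lies in this intersection, then $A_i\subseteq X$ for some $i$ and $X\subseteq B_j$ for some $j$, so $A_i\subseteq B_j$ forces $j=i$ and hence $A_i\subseteq X\subseteq B_i$. The same inequality shows that this index $i$ is unique and that $A_i$ is the \emph{only} member of $\mathcal A$ contained in $X$, so $Z_{\mathcal A}(X)=A_i$ and $\vert Z_{\mathcal A}(X)\vert=\vert A_i\vert$. Therefore $U_n(\mathcal A)\cap D_n(\mathcal B)=\bigsqcup_{i=1}^q\{X:A_i\subseteq X\subseteq B_i\}$ is a disjoint union of intervals, each nonempty since $A_i\subseteq B_i$, and the remaining task splits into the $q$ interval identities
\[
\sum_{A_i\subseteq X\subseteq B_i}\frac{\vert A_i\vert+m}{(\vert X\vert+m)\binom{n+m}{\vert X\vert+m}}=\frac{1}{\binom{n+m-\vert B_i\vert+\vert A_i\vert}{\vert A_i\vert+m}},\qquad i=1,\dots,q .
\]

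Third, I would establish this interval identity, which is the computational core. Grouping the sets $X$ with $A_i\subseteq X\subseteq B_i$ by their size $\vert X\vert=k$ contributes a factor $\binom{\vert B_i\vert-\vert A_i\vert}{k-\vert A_i\vert}$, and after setting $A=\vert A_i\vert+m$, $B=\vert B_i\vert+m$, $N=n+m$ the claim becomes the numerical identity
\[
\sum_{j=0}^{B-A}\binom{B-A}{j}\frac{A}{(A+j)\binom{N}{A+j}}=\frac{1}{\binom{N-B+A}{A}} .
\]
The cleanest route I see is the Beta-integral representation $\frac{1}{(A+j)\binom{N}{A+j}}=\int_0^1 t^{A+j-1}(1-t)^{N-A-j}\,dt$; substituting it, the inner binomial sum collapses through $\sum_{j}\binom{B-A}{j}\bigl(t/(1-t)\bigr)^j=(1-t)^{-(B-A)}$, leaving $A\int_0^1 t^{A-1}(1-t)^{N-B}\,dt=A\,\mathrm{B}(A,N-B+1)$, which equals $\frac{1}{\binom{N-B+A}{A}}$. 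The hypothesis $\vert A_i\vert+m>0$ guarantees $A>0$ so the integral converges, and $N\ge B$ holds because $B_i\subseteq[n]$.

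The first two steps are essentially bookkeeping, so the main obstacle is the numerical identity of the third step, together with the requirement that the argument be valid for \emph{every} admissible integer $m$, including negative values. The Beta-integral computation handles all of these uniformly as soon as $A>0$ and $N\ge B\ge A\ge1$; should one wish to avoid analysis, I would instead prove the identity by induction on the gap $\vert B_i\vert-\vert A_i\vert$ via Pascal's rule, at the cost of a more delicate telescoping.
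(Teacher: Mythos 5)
Your proof is correct, and it takes a genuinely different route from the paper's. The paper never proves (5) directly: it obtains it (via Lemma \ref{lm_special_case}, setting $a=1$) as a special case of the more general Theorem \ref{thm_main2}, which is proved by induction on $q$ --- the base case $q=1$ combines Theorem \ref{thm_main1} with the interval-sum Lemma \ref{lm_pre_main2}, and the inductive step invokes the inclusion-exclusion decomposition of Lemma \ref{lm_Thu2} (which rests on Lemma \ref{lm_set_thu}). You bypass the induction on $q$ altogether: your observation that the cross-condition $A_j\subseteq B_k \iff j=k$ forces $U_n(\mathcal A)\cap D_n(\mathcal B)$ to be the \emph{disjoint} union of the intervals $\{X : A_i\subseteq X\subseteq B_i\}$, on each of which $Z_{\mathcal A}(X)=A_i$, replaces that machinery entirely, so a single application of (4) plus $q$ interval identities suffices; in effect you show that the paper's base-case argument works for every $q$ at once. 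Your interval identity is precisely the paper's Lemma \ref{lm_pre_main2} in the case $a=1$, $m$ an integer; the paper proves it by induction on $\vert B\vert-\vert A\vert$ via the Pascal-type recursion of Lemma \ref{lm_first_link}, whereas you evaluate it with a Beta integral (correctly checking that $\vert A_i\vert+m\geq 1$ and $n+m\geq \vert B_i\vert+m$ make the representation valid, and noting the Pascal-rule induction as an analysis-free alternative). The trade-off: your argument is shorter and self-contained for the statement at hand, but it is tied to $a=1$ and integer parameters, where factorials and the Beta function make sense; the paper's $g_{a,m}$-calculus and inductive framework are built to deliver the stronger Theorem \ref{thm_main2} for arbitrary real $a$ and $m$, from which both (3) and (5) fall out as corollaries.
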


In this paper, we will give generalizations of equations (4) and (5) (see Theorem \ref{thm_main1} and Theorem \ref{thm_main2}).

\section{Main theorems}

Let us denote the set of real numbers by $\mathbb R$ and the set of natural numbers by $\mathbb N$. Let $a,m\in\mathbb R$ and $n\in\mathbb N$. Suppose that $ak+m\neq 0$ for $k=l,l+1,\dots, n$. We set 
\begin{equation}
g_{a,m}(n,l)=\frac{(n-l)!a^{n-l}}{\prod_{k=l}^n (ak+m)}.\notag
\end{equation}

\begin{lm}\label{lm_first_link} Suppose $l<n$. If $ak+m\neq 0$ for $k=l,l+1,\dots, n$, then
\begin{equation}
g_{a,m}(n,l)+g_{a,m}(n,l+1)=g_{a,m}(n-1,l).\notag
\end{equation}
\end{lm}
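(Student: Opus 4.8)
The plan is to prove the identity by a direct computation, expressing all three quantities over a common denominator and reducing the left-hand side to the right-hand side via a single elementary algebraic simplification. Set $P=\prod_{k=l}^{n}(ak+m)$, which is nonzero by hypothesis. Since $l<n$, all three expressions are well-defined: the relevant index ranges $\{l+1,\dots,n\}$ and $\{l,\dots,n-1\}$ are nonempty and their product terms are nonzero as well.

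First I would record the three terms with denominator $P$. We have directly $g_{a,m}(n,l)=\frac{(n-l)!\,a^{n-l}}{P}$. For the second term, note that $\prod_{k=l+1}^{n}(ak+m)=P/(al+m)$, so multiplying numerator and denominator by $(al+m)$ gives
\begin{equation}
g_{a,m}(n,l+1)=\frac{(n-l-1)!\,a^{n-l-1}\,(al+m)}{P}.\notag
\end{equation}
Similarly, $\prod_{k=l}^{n-1}(ak+m)=P/(an+m)$, so
\begin{equation}
g_{a,m}(n-1,l)=\frac{(n-l-1)!\,a^{n-l-1}\,(an+m)}{P}.\notag
\end{equation}

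Next I would add the first two expressions and factor $(n-l-1)!\,a^{n-l-1}$ out of the numerator, using $(n-l)!=(n-l)(n-l-1)!$ and $a^{n-l}=a\cdot a^{n-l-1}$, to obtain
\begin{equation}
g_{a,m}(n,l)+g_{a,m}(n,l+1)=\frac{(n-l-1)!\,a^{n-l-1}\bigl[(n-l)a+(al+m)\bigr]}{P}.\notag
\end{equation}
The key observation is then the cancellation $(n-l)a+(al+m)=an-al+al+m=an+m$, so the bracket collapses to $an+m$. Comparing with the expression for $g_{a,m}(n-1,l)$ recorded above finishes the proof.

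There is no genuine obstacle here; the argument is pure bookkeeping. The only points requiring care are the nonvanishing conditions that legitimize writing $P/(al+m)$ and $P/(an+m)$ (guaranteed by the hypothesis $ak+m\neq 0$ for $k=l,\dots,n$) and the assumption $l<n$, which ensures the factorials and products appearing in $g_{a,m}(n,l+1)$ and $g_{a,m}(n-1,l)$ are meaningful. The whole identity hinges on the single linear simplification $(n-l)a+(al+m)=an+m$.
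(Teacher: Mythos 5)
Your proof is correct and takes essentially the same route as the paper's: both combine the two summands over the common denominator $\prod_{k=l}^{n}(ak+m)$, factor out $(n-l-1)!\,a^{n-l-1}$, and rely on the linear cancellation $(n-l)a+(al+m)=an+m$. The only cosmetic difference is that you also rewrite $g_{a,m}(n-1,l)$ over that same denominator, whereas the paper instead cancels the factor $(an+m)$ from the denominator at the last step.
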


\begin{proof} Note that
\begin{align}
g_{a,m}(n,l)+g_{a,m}(n,l+1) &=\frac{(n-l)!a^{n-l}}{\prod_{k=l}^n (ak+m)}+\frac{(n-l-1)!a^{n-l-1}}{\prod_{k=l+1}^n (ak+m)}\notag\\
&=\frac{(n-l)!a^{n-l}+(al+m)(n-l-1)!a^{n-l-1}}{\prod_{k=l}^n (ak+m)}\notag\\
&=\frac{n(n-l-1)!a^{n-l}+m(n-l-1)!a^{n-l-1}}{\prod_{k=l}^n (ak+m)}\notag\\
&=\frac{(n-l-1)!a^{n-l-1}}{\prod_{k=l}^{n-1} (ak+m)}\notag\\
&=g_{a,m}(n-1,l).\notag
\end{align}
\end{proof}

The following lemma can be verified easily.

\begin{lm}\label{lm_special_case} Suppose that $ak+m\neq 0$ for $k=l,l+1,\dots, n$.
\begin{itemize}
\item[\textnormal{(a)}] If $a=1$ and $m$ is an integer, then
\begin{equation}
g_{1,m}(n,l)=\frac{1}{(l+m)\binom{n+m}{l+m}}.\notag
\end{equation}
\item[\textnormal{(b)}] If $a=1$ and $m=0$, then
\begin{equation}
g_{1,0}(n,l)=\frac{1}{(l)\binom{n}{l}}.\notag
\end{equation}\qed
\end{itemize}
\end{lm}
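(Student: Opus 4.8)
The plan is to prove (a) by a direct substitution into the definition of $g_{a,m}(n,l)$ and to recognize the resulting product as a ratio of factorials; part (b) then falls out as the special case $m=0$. First I would set $a=1$, so that $a^{n-l}=1$ and
\[
g_{1,m}(n,l)=\frac{(n-l)!}{\prod_{k=l}^{n}(k+m)}.
\]
Since $m$ is an integer, reindexing the product via $j=k+m$ rewrites $\prod_{k=l}^{n}(k+m)$ as the product of the consecutive integers $l+m,\,l+m+1,\,\dots,\,n+m$, that is, $\prod_{j=l+m}^{n+m} j$.

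Next I would identify this product with a ratio of factorials, $\prod_{j=l+m}^{n+m} j=\frac{(n+m)!}{(l+m-1)!}$, and expand the claimed right-hand side using $\binom{n+m}{l+m}=\frac{(n+m)!}{(l+m)!\,(n-l)!}$ (noting that the top index minus the bottom index is exactly $n-l$). A one-line simplification then yields
\[
\frac{1}{(l+m)\binom{n+m}{l+m}}=\frac{(l+m)!\,(n-l)!}{(l+m)\,(n+m)!}=\frac{(l+m-1)!\,(n-l)!}{(n+m)!}=\frac{(n-l)!}{\prod_{k=l}^{n}(k+m)}=g_{1,m}(n,l),
\]
which establishes (a). For (b) I would simply put $m=0$ in (a) (equivalently, substitute $a=1$, $m=0$ directly), collapsing the product to $\prod_{k=l}^{n}k=n!/(l-1)!$ and obtaining $g_{1,0}(n,l)=\frac{(n-l)!\,(l-1)!}{n!}=\frac{1}{l\binom{n}{l}}$.

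The one point that needs care --- and the only real obstacle in an otherwise routine verification --- is the meaning of the factorial rewriting when $m$ is a negative integer, since then some factors $k+m$ are negative and $\frac{(n+m)!}{(l+m-1)!}$ is not literally a quotient of factorials of nonnegative integers. The hypothesis only guarantees $k+m\neq 0$ on the range $k=l,\dots,n$, not that $l+m>0$. To handle this cleanly I would interpret $\binom{n+m}{l+m}$ as the generalized binomial coefficient $\frac{(n+m)(n+m-1)\cdots(l+m+1)}{(n-l)!}$ (with the empty numerator product equal to $1$ when $n=l$), so that the identity $(l+m)\binom{n+m}{l+m}\,g_{1,m}(n,l)=1$ holds as an equality of rational expressions on the whole admissible range. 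Alternatively, one can avoid factorial manipulations altogether by checking, via Pascal's rule, that the proposed right-hand side satisfies the recurrence of Lemma \ref{lm_first_link} together with a base case, and concluding by induction; but since both sides are explicit, the direct substitution above is the shortest route, consistent with the remark that the lemma is verified easily.
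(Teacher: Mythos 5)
Your computation is correct and is precisely the routine verification the paper intends, since it states this lemma without proof (``can be verified easily''). Your closing observation is also a genuine subtlety: when $m$ is a negative integer with $m<-n$ (which the hypothesis $k+m\neq 0$ for $l\leq k\leq n$ does not exclude), the coefficient $\binom{n+m}{l+m}$ has negative lower index and is zero or undefined under standard conventions, so the lemma as literally stated requires your generalized-binomial reading; the paper never addresses this, although in its applications (Theorems \ref{thm_AZ_identity1_general} and \ref{thm_AZ_identity2_general}, where $\vert F\vert+m>0$ is assumed) only the unproblematic case $l+m>0$ ever arises.
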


We shall need the following lemma (see equation (3) of \cite{Thu1}, or Lemma 2 of \cite{Day}).

\begin{lm}\label{lm_set_thu} Let $\varnothing\notin \mathcal A\in\mathbf G_n$ and $\varnothing\notin \mathcal B\in\mathbf G_n$. Set 
\begin{equation}
\mathcal A\vee\mathcal B=\{A\cup B\ :\ A\in\mathcal A, B\in\mathcal B\}.\notag
\end{equation}
Then for each $\varnothing\neq X\subseteq [n]$, 
\begin{equation}
\left\vert Z_{\mathcal A\cup\mathcal B}(X)\right\vert=\left\vert Z_{\mathcal A}(X)\right\vert+\left\vert Z_{\mathcal B}(X)\right\vert-\left\vert Z_{\mathcal A\vee\mathcal B}(X)\right\vert.\notag
\end{equation}\qed
\end{lm}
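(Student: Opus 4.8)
The plan is to fix $\varnothing \neq X \subseteq [n]$ and split into cases according to whether $X$ lies in $U_n(\mathcal A)$ and in $U_n(\mathcal B)$, since this is precisely what governs when each $Z$-term is empty. The substantive work lies in the single case $X \in U_n(\mathcal A) \cap U_n(\mathcal B)$; the remaining cases will follow by checking that both sides of the claimed identity collapse consistently.

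First I would record the behaviour of the operator $U_n$ under the two operations $\cup$ and $\vee$. Since a member of $\mathcal A \cup \mathcal B$ sits inside $X$ exactly when some $A \in \mathcal A$ or some $B \in \mathcal B$ does, we have $X \in U_n(\mathcal A \cup \mathcal B)$ if and only if $X \in U_n(\mathcal A)$ or $X \in U_n(\mathcal B)$. Likewise, because $A \cup B \subseteq X$ holds iff $A \subseteq X$ and $B \subseteq X$, we get $X \in U_n(\mathcal A \vee \mathcal B)$ if and only if $X \in U_n(\mathcal A)$ and $X \in U_n(\mathcal B)$.

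Next, in the main case $X \in U_n(\mathcal A) \cap U_n(\mathcal B)$, I would establish the two set identities
$$Z_{\mathcal A \cup \mathcal B}(X) = Z_{\mathcal A}(X) \cap Z_{\mathcal B}(X), \qquad Z_{\mathcal A \vee \mathcal B}(X) = Z_{\mathcal A}(X) \cup Z_{\mathcal B}(X).$$
The first is immediate, since the defining intersection for $\mathcal A \cup \mathcal B$ splits into the members coming from $\mathcal A$ and those from $\mathcal B$. The second is the crux: writing $Z_{\mathcal A \vee \mathcal B}(X) = \bigcap (A \cup B)$ over all pairs with $A \in \mathcal A$, $A \subseteq X$ and $B \in \mathcal B$, $B \subseteq X$, I would argue pointwise. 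An element $x$ lies in the left side iff for every such pair $(A,B)$ we have $x \in A$ or $x \in B$, while $x$ lies in the right side iff $x$ belongs to every admissible $A$ or to every admissible $B$. The forward implication is the delicate direction: if $x$ were missing from some admissible $A^\ast$ and from some admissible $B^\ast$, then the pair $(A^\ast, B^\ast)$ would witness $x \notin A^\ast \cup B^\ast$, a contradiction. With both identities in hand, inclusion--exclusion yields $|Z_{\mathcal A \cup \mathcal B}(X)| = |Z_{\mathcal A}(X)| + |Z_{\mathcal B}(X)| - |Z_{\mathcal A \vee \mathcal B}(X)|$, as required.

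Finally I would dispose of the degenerate cases using the $U_n$-computations above. If exactly one of the memberships holds, say $X \in U_n(\mathcal A)$ but $X \notin U_n(\mathcal B)$, then $Z_{\mathcal B}(X) = \varnothing$ and $X \notin U_n(\mathcal A \vee \mathcal B)$, so that $Z_{\mathcal A \vee \mathcal B}(X) = \varnothing$, while $Z_{\mathcal A \cup \mathcal B}(X) = Z_{\mathcal A}(X)$ because no $B \in \mathcal B$ is contained in $X$; both sides then reduce to $|Z_{\mathcal A}(X)|$. If neither membership holds, every term equals $0$ and the identity is trivial. The only genuine obstacle is the pointwise argument for $Z_{\mathcal A \vee \mathcal B}(X) = Z_{\mathcal A}(X) \cup Z_{\mathcal B}(X)$; once this distributive-type identity is secured, the rest is bookkeeping together with one application of inclusion--exclusion.
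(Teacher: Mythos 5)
Your proof is correct. There is, however, nothing internal to compare it against: the paper does not prove this lemma at all, but states it with a \verb|\qed| and defers to equation (3) of \cite{Thu1} and Lemma 2 of \cite{Day}. Your argument therefore supplies a complete, self-contained proof of a statement the paper leaves to the literature, and it is the natural one. The two $U_n$-equivalences ($X\in U_n(\mathcal A\cup\mathcal B)$ iff $X\in U_n(\mathcal A)$ or $X\in U_n(\mathcal B)$; $X\in U_n(\mathcal A\vee\mathcal B)$ iff $X\in U_n(\mathcal A)$ and $X\in U_n(\mathcal B)$) are right, the two set identities in the main case,
\begin{equation}
Z_{\mathcal A\cup\mathcal B}(X)=Z_{\mathcal A}(X)\cap Z_{\mathcal B}(X),\qquad Z_{\mathcal A\vee\mathcal B}(X)=Z_{\mathcal A}(X)\cup Z_{\mathcal B}(X),\notag
\end{equation}
are both valid (the second by exactly the contrapositive pairing $(A^\ast,B^\ast)$ you describe, which is the distributive law for intersections of unions), and inclusion--exclusion plus the degenerate cases finishes it. One point you glide over silently but harmlessly: the intersection defining $Z_{\mathcal A\vee\mathcal B}(X)$ runs over the \emph{sets} $F\in\mathcal A\vee\mathcal B$ with $F\subseteq X$, whereas you index it by \emph{pairs} $(A,B)$ with $A\subseteq X$, $B\subseteq X$; these agree because any representation $F=A\cup B$ of such an $F$ automatically satisfies $A,B\subseteq F\subseteq X$, and listing a set several times does not change an intersection. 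Your approach buys transparency and independence from the references; the paper's citation buys brevity.
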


\begin{thm}\label{thm_main1} Let $a,m\in\mathbb R$ and $n\in\mathbb N$.  Let $\varnothing\notin \mathcal A\in\mathbf G_n$. Suppose that $ak+m\neq 0$ for all $\min_{A\in\mathcal A} \vert A\vert\leq k\leq n$. Then
\begin{equation}
\sum_{X\in U_n(\mathcal A)} \left(a\vert Z_{\mathcal A}(X)\vert+m\right)g_{a,m}(n,\vert X\vert)=1.\tag{6}
\end{equation}
\end{thm}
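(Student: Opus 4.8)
The plan is to prove this by induction on $n$, mirroring the classical inductive proof of the AZ-identity but phrased in terms of the weight function $g_{a,m}$. Lemma~\ref{lm_first_link} is precisely the recurrence that makes such an induction work: it tells us that the weight $g_{a,m}(n,l)$ splits as $g_{a,m}(n,l)=g_{a,m}(n-1,l)-g_{a,m}(n,l+1)$, which is the engine for relating the level-$n$ sum to a level-$(n-1)$ sum. The idea is to peel off the element $n$ from the ground set $[n]$: I would partition $U_n(\mathcal A)$ according to whether a set $X$ contains $n$ or not, and compare the resulting partial sums against the corresponding quantity for the trace of $\mathcal A$ on $[n-1]$.

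**Carrying out the induction.**

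First I would verify the base case, which is the situation where $\mathcal A=\{[n]\}$ (or more generally where $\mathcal A$ consists of a single set $A$, reducing to the one-chain computation). Here $U_n(\mathcal A)$ consists of all supersets of the common set, $Z_{\mathcal A}(X)=A$ for every such $X$, and the sum telescopes via repeated application of Lemma~\ref{lm_first_link}, collapsing down to $g_{a,m}(|A|,|A|)=1$ by the definition of $g_{a,m}$ (the empty product and empty factorial both giving $1$). Next, for the inductive step I would write $\mathcal A$ in terms of its behaviour relative to the coordinate $n$. The technically cleanest route is to treat each $X\in U_n(\mathcal A)$ by distinguishing $n\in X$ from $n\notin X$, and to use Lemma~\ref{lm_first_link} to rewrite $g_{a,m}(n,|X|)$ so that the two pieces reassemble into the level-$(n-1)$ sums for the appropriate restricted families. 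The factor $a|Z_{\mathcal A}(X)|+m$ must be tracked carefully through this splitting, since $Z_{\mathcal A}(X)$ can change when one passes to the trace on $[n-1]$.

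**The main obstacle.**

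The hard part will be bookkeeping the $Z$-values under the peeling operation: unlike the plain AZ-identity where one only tracks cardinalities of intersections, here the numerator $a|Z_{\mathcal A}(X)|+m$ couples multiplicatively with $g_{a,m}$, and I expect the cleanest handling to come from Lemma~\ref{lm_set_thu}, which expresses $|Z_{\mathcal A\cup\mathcal B}(X)|$ through an inclusion–exclusion over $Z_{\mathcal A}$, $Z_{\mathcal B}$ and $Z_{\mathcal A\vee\mathcal B}$. This suggests an alternative (and probably more robust) approach: induct on the number $|\mathcal A|$ of sets in the family rather than on $n$. One would establish the identity for a single set $A$ directly (the telescoping base case above), and then for the inductive step write $\mathcal A=\mathcal A'\cup\{A\}$ and apply Lemma~\ref{lm_set_thu} with $\mathcal B=\{A\}$ to reduce the $\mathcal A$-sum to a combination of the $\mathcal A'$-sum, the $\{A\}$-sum, and an $(\mathcal A'\vee\{A\})$-sum; linearity of $X\mapsto (a|Z(X)|+m)g_{a,m}(n,|X|)$ in the $|Z|$-argument is exactly what lets these recombine. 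The delicate point in either approach is ensuring the support sets $U_n(\cdot)$ match up correctly across the decomposition so that the inclusion–exclusion of $Z$-cardinalities is valid termwise on a common index set; once that alignment is secured, the identity follows from the recurrence of Lemma~\ref{lm_first_link} and the single-set base case.
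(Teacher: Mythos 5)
Your proposal follows essentially the same route as the paper's proof: the single-set case is handled by telescoping with Lemma~\ref{lm_first_link} (the paper's Case 1), and the general case by induction on $\vert\mathcal A\vert$, splitting off one set and applying the inclusion--exclusion of Lemma~\ref{lm_set_thu} together with the facts $U_n(\mathcal A'\cup\{A\})=U_n(\mathcal A')\cup U_n(\{A\})$ and $U_n(\mathcal A'\vee\{A\})=U_n(\mathcal A')\cap U_n(\{A\})$ (the paper's Case 2). One small correction to your base case: $g_{a,m}(r,r)=1/(ar+m)$ rather than $1$ (the product $\prod_{k=r}^{r}(ak+m)$ is not empty), so the telescoped sum collapses to $(a\vert A\vert+m)\,g_{a,m}(\vert A\vert,\vert A\vert)=1$ only once the numerator factor is carried along, exactly as in the paper.
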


\begin{proof} {\bf Case 1}. Suppose $\mathcal A=\{A\}$. We may assume that $A=\{1,2,\dots ,r\}$. 

Note that if  $r=n$, then $U_n(\mathcal A)=\{ A\}$, $Z_{\mathcal A}(A)=A$, and $\sum_{X\in U_n(\mathcal A)} \left(a\vert Z_{\mathcal A}(X)\vert+m\right)g_{a,m}(n,\vert X\vert)=(an+m)g_{a,m}(n,n)=1$. Suppose $r<n$. Note that
$U_n(\mathcal A)=U_{n-1}(\mathcal A)\cup\{ X\cup \{n\}\ :\ X\in U_{n-1}(\mathcal A)\}$, and $Z_{\mathcal A}(X)=Z_{\mathcal A}(X\cup \{n\})$. Therefore by Lemma \ref{lm_first_link},
\begin{align}
\sum_{X\in U_n(\mathcal A)} &\left(a\vert Z_{\mathcal A}(X)\vert+m\right)g_{a,m}(n,\vert X\vert)\notag\\
&=\sum_{X\in U_{n-1}(\mathcal A)} \left(a\vert Z_{\mathcal A}(X)\vert+m\right)g_{a,m}(n,\vert X\vert)\notag\\
&\hskip 1cm+\sum_{X\in U_{n-1}(\mathcal A)} \left(a\vert Z_{\mathcal A}(X\cup\{n\})\vert+m\right)g_{a,m}(n,\vert X\vert+1)\notag\\
&=\sum_{X\in U_{n-1}(\mathcal A)} \left(a\vert Z_{\mathcal A}(X)\vert+m\right)\left(g_{a,m}(n,\vert X\vert)+g_{a,m}(n,\vert X\vert+1)\right)\notag\\
&=\sum_{X\in U_{n-1}(\mathcal A)} \left(a\vert Z_{\mathcal A}(X)\vert+m\right)g_{a,m}(n-1,\vert X\vert).\notag
\end{align}
If $r=n-1$, then $U_{n-1}(\mathcal A)=\{ A\}$, $Z_{\mathcal A}(A)=A$, and $\sum_{X\in U_{n-1}(\mathcal A)} \left(a\vert Z_{\mathcal A}(X)\vert+m\right)g_{a,m}(n-1,\vert X\vert)=(a(n-1)+m)g_{a,m}(n-1,n-1)=1$. So the theorem holds. Suppose $r<n-1$. Again by Lemma \ref{lm_first_link}, 
\begin{align}
\sum_{X\in U_{n-1}(\mathcal A)} &\left(a\vert Z_{\mathcal A}(X)\vert+m\right)g_{a,m}(n-1,\vert X\vert)\notag\\
&=\sum_{X\in U_{n-2}(\mathcal A)} \left(a\vert Z_{\mathcal A}(X)\vert+m\right)g_{a,m}(n-2,\vert X\vert).\notag
\end{align}
By continuing this way, we see that
\begin{align}
\sum_{X\in U_{n}(\mathcal A)} &\left(a\vert Z_{\mathcal A}(X)\vert+m\right)g_{a,m}(n,\vert X\vert)\notag\\
&=\sum_{X\in U_{r}(\mathcal A)} \left(a\vert Z_{\mathcal A}(X)\vert+m\right)g_{a,m}(r,\vert X\vert)\notag\\
&=(ar+m)g_{a,m}(r,r)\notag\\
&=1.\notag
\end{align}

\noindent
{\bf Case 2}. Suppose $\mathcal A=\{A_1,\dots, A_q\}$, $q\geq 2$. Assume that the theorem holds for all $q'$ with $1\leq q'<q$. Let $\mathcal B=\{A_1,\dots, A_{q-1}\}$ and $\mathcal C=\{A_q\}$. Then $\mathcal B\vee\mathcal C=\{A_1\cup A_q,\dots , A_{q-1}\cup A_q\}$, $U_n(\mathcal A)=U_n(\mathcal B)\cup U_n(\mathcal C)$ and $U_n(\mathcal B\vee\mathcal C)=U_n(\mathcal B)\cap U_n(\mathcal C)$. By Lemma \ref{lm_set_thu}, 
\begin{equation}
\left\vert Z_{\mathcal A}(X)\right\vert=\left\vert Z_{\mathcal B}(X)\right\vert+\left\vert Z_{\mathcal C}(X)\right\vert-\left\vert Z_{\mathcal B\vee\mathcal C}(X)\right\vert.\notag
\end{equation}
So if $X\in U_{n}(\mathcal B)\setminus U_n(\mathcal C)$, then $\left\vert Z_{\mathcal A}(X)\right\vert=\left\vert Z_{\mathcal B}(X)\right\vert$, if $X\in U_{n}(\mathcal C)\setminus U_n(\mathcal B)$, then $\left\vert Z_{\mathcal A}(X)\right\vert=\left\vert Z_{\mathcal C}(X)\right\vert$, and if $X\in U_{n}(\mathcal B)\cap  U_n(\mathcal C)$, then $\left\vert Z_{\mathcal A}(X)\right\vert=\left\vert Z_{\mathcal B}(X)\right\vert+\left\vert Z_{\mathcal C}(X)\right\vert-\left\vert Z_{\mathcal B\vee\mathcal C}(X)\right\vert$.

Therefore
\begin{align}
\sum_{X\in U_{n}(\mathcal A)} &\left(a\vert Z_{\mathcal A}(X)\vert+m\right)g_{a,m}(n,\vert X\vert)\notag\\
&=\sum_{X\in U_{n}(\mathcal B)\setminus U_n(\mathcal C)} \left(a\vert Z_{\mathcal B}(X)\vert+m\right)g_{a,m}(n,\vert X\vert)\notag\\
&\hskip 0.5cm +\sum_{X\in U_{n}(\mathcal C)\setminus U_n(\mathcal B)} \left(a\vert Z_{\mathcal C}(X)\vert+m\right)g_{a,m}(n,\vert X\vert)\notag\\
&\hskip 1cm +\sum_{X\in U_n(\mathcal B\vee\mathcal C)} \left(a\left(\left\vert Z_{\mathcal B}(X)\right\vert+\left\vert Z_{\mathcal C}(X)\right\vert-\left\vert Z_{\mathcal B\vee\mathcal C}(X)\right\vert\right)+m\right)g_{a,m}(n,\vert X\vert)\notag\\
&=\sum_{X\in U_{n}(\mathcal B)} \left(a\vert Z_{\mathcal B}(X)\vert+m\right)g_{a,m}(n,\vert X\vert)\notag\\
&\hskip 0.5cm +\sum_{X\in U_{n}(\mathcal C)} \left(a\vert Z_{\mathcal C}(X)\vert+m\right)g_{a,m}(n,\vert X\vert)\notag\\
&\hskip 1cm -\sum_{X\in U_n(\mathcal B\vee\mathcal C)} \left(a\left\vert Z_{\mathcal B\vee\mathcal C}(X)\right\vert+m\right)g_{a,m}(n,\vert X\vert),\notag
\end{align}
and by induction, 
\begin{equation}
\sum_{X\in U_{n}(\mathcal A)} \left(a\vert Z_{\mathcal A}(X)\vert+m\right)g_{a,m}(n,\vert X\vert)=1+1-1=1.\notag
\end{equation}
\end{proof}

Note that by  Lemma \ref{lm_special_case}, equations (2) and (4) are consequence of Theorem \ref{thm_main1}.

We shall need the following lemma (see Lemma 4 of \cite{Thu3}).

\begin{lm}\label{lm_Thu2} Let $\mathcal A_1,\mathcal A_2,\mathcal B_1,\mathcal B_2\in\mathbf G_n$ and $\varnothing\notin \mathcal A_1\cup \mathcal A_2\cup \mathcal B_1\cup\mathcal B_2$. Suppose that $U_n(\mathcal A_1)\cap D_n(\mathcal B_2)=\varnothing=U_n(\mathcal A_2)\cap D_n(\mathcal B_1)$. Let $\mathcal A=\mathcal A_1\cup\mathcal A_2$ and $\mathcal B=\mathcal B_1\cup\mathcal B_2$. If $F$ is a non-zero function defined on $U_n(\mathcal A)$, then
\begin{align}
\sum_{X\in U_{n}(\mathcal A)\setminus D_n(\mathcal B)} \frac{a\vert Z_{\mathcal A}(X)\vert+m}{F(X)} &=\sum_{X\in U_{n}(\mathcal A_1)\setminus D_n(\mathcal B_1)} \frac{a\vert Z_{\mathcal A_1}(X)\vert+m}{F(X)}\notag\\
&\hskip 0.5cm +\sum_{X\in U_{n}(\mathcal A_2)\setminus D_n(\mathcal B_2)} \frac{a\vert Z_{\mathcal A_2}(X)\vert+m}{F(X)}\notag\\
&\hskip 1cm -\sum_{X\in U_{n}(\mathcal A_1\vee \mathcal A_1)} \frac{a\vert Z_{\mathcal A_1\vee \mathcal A_2}(X)\vert+m}{F(X)}.\notag
\end{align}\qed
\end{lm}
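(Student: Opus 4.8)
The plan is to prove the identity by an inclusion--exclusion argument carried out at the level of the summands, in the same spirit as Case~2 of the proof of Theorem~\ref{thm_main1}, but now tracking the downset restrictions. First I would record the elementary set identities $U_n(\mathcal A)=U_n(\mathcal A_1)\cup U_n(\mathcal A_2)$, $D_n(\mathcal B)=D_n(\mathcal B_1)\cup D_n(\mathcal B_2)$, and $U_n(\mathcal A_1\vee\mathcal A_2)=U_n(\mathcal A_1)\cap U_n(\mathcal A_2)$ (the last being exactly the fact already used in Theorem~\ref{thm_main1}). Every index $X$ occurring here is nonempty, since $\varnothing\notin\mathcal A_1\cup\mathcal A_2$ forces $\varnothing\notin U_n(\mathcal A)$, so Lemma~\ref{lm_set_thu} applies to $\mathcal A=\mathcal A_1\cup\mathcal A_2$ and yields, for each such $X$, the numerator decomposition $a\vert Z_{\mathcal A}(X)\vert+m=\bigl(a\vert Z_{\mathcal A_1}(X)\vert+m\bigr)+\bigl(a\vert Z_{\mathcal A_2}(X)\vert+m\bigr)-\bigl(a\vert Z_{\mathcal A_1\vee\mathcal A_2}(X)\vert+m\bigr)$, in which the three stray copies of $m$ collapse to a single $m$. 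I would also keep in mind that, by the convention $Z_{\mathcal F}(X)=\varnothing$ for $X\notin U_n(\mathcal F)$, the term $a\vert Z_{\mathcal F}(X)\vert+m$ contributes a vanishing $Z$-part off $U_n(\mathcal F)$.

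Next I would partition $U_n(\mathcal A)$ into the three regions $U_n(\mathcal A_1)\setminus U_n(\mathcal A_2)$, $U_n(\mathcal A_2)\setminus U_n(\mathcal A_1)$, and $U_n(\mathcal A_1)\cap U_n(\mathcal A_2)=U_n(\mathcal A_1\vee\mathcal A_2)$, and evaluate the left-hand sum over each region after deleting $D_n(\mathcal B)$. The crucial step is to exploit the hypotheses $U_n(\mathcal A_1)\cap D_n(\mathcal B_2)=\varnothing$ and $U_n(\mathcal A_2)\cap D_n(\mathcal B_1)=\varnothing$: on $U_n(\mathcal A_1)$ membership in $D_n(\mathcal B_2)$ is impossible, and on $U_n(\mathcal A_2)$ membership in $D_n(\mathcal B_1)$ is impossible. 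Hence deleting $D_n(\mathcal B)$ from the first region leaves exactly $\{X\in U_n(\mathcal A_1)\setminus U_n(\mathcal A_2):X\notin D_n(\mathcal B_1)\}$, from the second leaves $\{X\in U_n(\mathcal A_2)\setminus U_n(\mathcal A_1):X\notin D_n(\mathcal B_2)\}$, and from the third leaves all of $U_n(\mathcal A_1\vee\mathcal A_2)$, since there both downset conditions are vacuous. On the first region $\vert Z_{\mathcal A}(X)\vert=\vert Z_{\mathcal A_1}(X)\vert$ and on the second $\vert Z_{\mathcal A}(X)\vert=\vert Z_{\mathcal A_2}(X)\vert$ (the remaining $Z$-terms vanish), while on the third the full decomposition from the first paragraph applies.

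Finally, writing $S$, $S_1$, $S_2$, $S_3$ for the four sums appearing in the statement (the left-hand side and the three right-hand terms, and reading the index $U_n(\mathcal A_1\vee\mathcal A_1)$ as the typographical variant of $U_n(\mathcal A_1\vee\mathcal A_2)$), I would expand $S_1$, $S_2$, $S_3$ over the same partition. Splitting $U_n(\mathcal A_1)\setminus D_n(\mathcal B_1)$ according to whether $X\in U_n(\mathcal A_2)$, and using that every $X\in U_n(\mathcal A_2)$ avoids $D_n(\mathcal B_1)$, shows that $S_1$ equals its contribution on $\{X\in U_n(\mathcal A_1)\setminus U_n(\mathcal A_2):X\notin D_n(\mathcal B_1)\}$ plus its contribution on all of $U_n(\mathcal A_1\vee\mathcal A_2)$; symmetrically for $S_2$, while $S_3$ is supported entirely on $U_n(\mathcal A_1\vee\mathcal A_2)$. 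Matching these expansions against the region-by-region form of $S$ term by term then gives $S=S_1+S_2-S_3$. I expect the main obstacle to be purely the bookkeeping of this matching: confirming that the cross-conditions render precisely the correct downset constraints redundant, so that the region $U_n(\mathcal A_1\vee\mathcal A_2)$ enters with no downset restriction in all of $S$, $S_1$, $S_2$, and $S_3$, and that the summands on each region agree on both sides.
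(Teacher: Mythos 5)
Your proposal is correct and takes essentially the same route as the paper: the paper's own (one-line) justification is exactly the decomposition of $U_{n}(\mathcal A)\setminus D_n(\mathcal B)$ into the three regions you identify, namely $U_{n}(\mathcal A_1)\setminus (D_n(\mathcal B_1)\cup U_n(\mathcal A_2))$, $U_{n}(\mathcal A_2)\setminus (D_n(\mathcal B_2)\cup U_n(\mathcal A_1))$, and $U_{n}(\mathcal A_1\vee \mathcal A_2)$, followed by an application of Lemma \ref{lm_set_thu}. You simply carry out in full the bookkeeping the paper leaves to the reader, and you correctly read $U_{n}(\mathcal A_1\vee \mathcal A_1)$ as a typo for $U_{n}(\mathcal A_1\vee \mathcal A_2)$.
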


In fact Lemma \ref{lm_Thu2} can be proved easily by noting that 
\begin{align}
U_{n}(\mathcal A)\setminus D_n(\mathcal B)&=(U_{n}(\mathcal A_1)\setminus (D_n(\mathcal B_1)\cup U_n(\mathcal A_2)))\notag\\
&\hskip 1cm \cup (U_{n}(\mathcal A_2)\setminus (D_n(\mathcal B_2)\cup U_n(\mathcal A_1))) \cup U_{n}(\mathcal A_1\vee \mathcal A_2),\notag
\end{align}
and by applying Lemma \ref{lm_set_thu}.

\begin{lm}\label{lm_pre_main2}  Let $a,m\in\mathbb R$ and $n\in\mathbb N$. Let $A,B$ be non-empty subsets of $[n]$. If $A\subseteq B$, and $ak+ m\neq 0$ for all $\vert A\vert\leq k\leq n$, then
\begin{equation}
\sum_{A\subseteq X\subseteq B} g_{a,m}(n,\vert X\vert)=g_{a,m}(n-\vert B\vert+\vert A\vert,\vert A\vert).\notag
\end{equation}
\end{lm}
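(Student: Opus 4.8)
The plan is to induct on $d=\vert B\vert-\vert A\vert\ge 0$, peeling off one element of $B\setminus A$ at each step and collapsing the resulting pairs of terms by means of Lemma \ref{lm_first_link}. For the base case $d=0$ we have $A=B$, so the left-hand side is the single term $g_{a,m}(n,\vert A\vert)$, which is exactly $g_{a,m}(n-\vert B\vert+\vert A\vert,\vert A\vert)$.

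For the inductive step, suppose $d\ge 1$ and fix any $b\in B\setminus A$; put $B'=B\setminus\{b\}$, so that $A\subseteq B'$, $B'\neq\varnothing$, and $\vert B'\vert-\vert A\vert=d-1$. The key combinatorial observation is that the subsets $X$ with $A\subseteq X\subseteq B$ fall into two classes according to whether $b\notin X$ or $b\in X$: the former are exactly the sets $Y$ with $A\subseteq Y\subseteq B'$, and the latter are exactly the sets $Y\cup\{b\}$ with $A\subseteq Y\subseteq B'$ (here I use $b\notin A$ to guarantee $A\subseteq Y$). Grouping the two terms coming from each such $Y$, and noting $\vert Y\cup\{b\}\vert=\vert Y\vert+1$, I would write
\begin{equation}
\sum_{A\subseteq X\subseteq B} g_{a,m}(n,\vert X\vert)=\sum_{A\subseteq Y\subseteq B'}\left(g_{a,m}(n,\vert Y\vert)+g_{a,m}(n,\vert Y\vert+1)\right).\notag
\end{equation}
Since $\vert Y\vert\le \vert B'\vert=\vert B\vert-1\le n-1<n$, and $ak+m\neq 0$ for $\vert Y\vert\le k\le n$ (as $\vert Y\vert\ge\vert A\vert$), Lemma \ref{lm_first_link} applies to each summand with $l=\vert Y\vert$, collapsing $g_{a,m}(n,\vert Y\vert)+g_{a,m}(n,\vert Y\vert+1)$ into $g_{a,m}(n-1,\vert Y\vert)$.

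This reduces the left-hand side to $\sum_{A\subseteq Y\subseteq B'} g_{a,m}(n-1,\vert Y\vert)$, which is precisely the statement of the lemma for the pair $A\subseteq B'$ with $n$ replaced by $n-1$. Before invoking the induction hypothesis I would verify that its hypotheses hold: $A,B'$ are non-empty, $A\subseteq B'$, and $ak+m\neq 0$ for all $\vert A\vert\le k\le n-1$ (this range is contained in the one assumed). The induction hypothesis then yields $g_{a,m}((n-1)-\vert B'\vert+\vert A\vert,\vert A\vert)=g_{a,m}(n-\vert B\vert+\vert A\vert,\vert A\vert)$, completing the step.

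The only real care needed, and the main potential obstacle, is the bookkeeping of the hypotheses on $ak+m$ and the inequality $l<n$ required by Lemma \ref{lm_first_link}; both follow from $\vert B\vert\le n$ and $\vert A\vert\le k\le n$, but they must be checked at each reduction so that no denominator vanishes. An equivalent route is to first rewrite the left-hand side as $\sum_{s=0}^{d}\binom{d}{s} g_{a,m}(n,\vert A\vert+s)$, using that there are $\binom{d}{s}$ sets $X$ of size $\vert A\vert+s$ in the interval, apply Pascal's rule $\binom{d}{s}=\binom{d-1}{s}+\binom{d-1}{s-1}$ to obtain the recurrence $S(n,p,d)=S(n,p,d-1)+S(n,p+1,d-1)$ in the obvious notation, and finish again with Lemma \ref{lm_first_link}; this makes the role of the binomial coefficients explicit but is otherwise the same argument.
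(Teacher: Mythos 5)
Your proposal is correct and follows essentially the same route as the paper's own proof: induction on $\vert B\vert-\vert A\vert$, splitting the interval $\{X : A\subseteq X\subseteq B\}$ according to membership of a single element of $B\setminus A$, collapsing the paired terms via Lemma \ref{lm_first_link}, and invoking the induction hypothesis for $(A,B',n-1)$ (the paper simply normalizes $A,B$ to initial segments first and always removes the largest element). If anything, your version is slightly more careful: you verify the $ak+m\neq 0$ and $l<n$ hypotheses at each step and your case split $d\geq 1$ covers the value $d=1$, which the paper's ``Suppose $p>1$'' nominally skips.
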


\begin{proof}  We may assume that $A=\{1,2,\dots, r_1\}$ and $B=\{1,2,\dots, r_1,r_1+1,\dots, r_2\}$. We shall prove by induction on $p=r_2-r_1$.

Suppose $p=0$, i.e., $A=B$. Then
\begin{align}
\sum_{A\subseteq X\subseteq B} g_{a,m}(n,\vert X\vert)&=g_{a,m}(n,\vert A\vert).\notag
\end{align}
Suppose $p>1$. Assume that the lemma holds for $p'<p$. 

Note that $A\subsetneq B$ and $r_2\notin A$.  Set $B'=B\setminus \{r_2\}$. Then $A\subseteq B'$, and by Lemma \ref{lm_first_link},
\begin{align}
\sum_{A\subseteq X\subseteq B} g_{a,m}(n,\vert X\vert)&=\sum_{A\subseteq X\subseteq B'}g_{a,m}(n,\vert X\vert)+\sum_{A\subseteq X\subseteq B'}g_{a,m}(n,\vert X\cup \{r_{2}\}\vert)\notag\\
&=\sum_{A\subseteq X\subseteq B'} \left(g_{a,m}(n,\vert X\vert)+g_{a,m}(n,\vert X\vert+1)\right)\notag\\
&=\sum_{A\subseteq X\subseteq B'} g_{a,m}(n-1,\vert X\vert).\notag
\end{align}
By induction $\sum_{A\subseteq X\subseteq B'} g_{a,m}(n-1,\vert X\vert)=g_{a,m}(n-1-\vert B'\vert+\vert A\vert,\vert A\vert)=g_{a,m}(n-\vert B\vert+\vert A\vert,\vert A\vert)$. Hence $\sum_{A\subseteq X\subseteq B} g_{a,m}(n,\vert X\vert)=g_{a,m}(n-\vert B\vert+\vert A\vert,\vert A\vert)$.
\end{proof}

\begin{thm}\label{thm_main2} Let $a,m\in\mathbb R$ and $n\in\mathbb N$. Let $\mathcal A=\{A_1,A_2,\dots, A_q\}$ and $\mathcal B=\{B_1,B_2,\dots, B_q\}$ be elements in $\mathbf G_n$. Suppose that $A_i\neq\varnothing$ for all $i$, and $A_j\subseteq B_k$ if and only if $j=k$. If $ak+m\neq 0$ for all $\min_{A\in\mathcal A} \vert A\vert\leq k\leq n$, then
\begin{equation}
\sum_{i=1}^q (a\vert A_i\vert+m)g_{a,m}(n-\vert B_i\vert+\vert A_i\vert,\vert A_i\vert)+\sum_{X\in U_n(\mathcal A)\setminus D_n(\mathcal B)} \left(a\vert Z_{\mathcal A}(X)\vert+m\right)g_{a,m}(n,\vert X\vert)=1.\tag{7}
\end{equation}
\end{thm}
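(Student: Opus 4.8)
The plan is to derive (7) directly from the single-family identity of Theorem~\ref{thm_main1} rather than redoing an induction from scratch. Applying Theorem~\ref{thm_main1} to $\mathcal A$ (the hypothesis $ak+m\neq 0$ for $\min_{A\in\mathcal A}|A|\leq k\leq n$ is exactly what that theorem needs) gives $\sum_{X\in U_n(\mathcal A)}(a|Z_{\mathcal A}(X)|+m)g_{a,m}(n,|X|)=1$. Since the second sum in (7) already runs over $U_n(\mathcal A)\setminus D_n(\mathcal B)$, it suffices to show that the complementary piece, the sum over $U_n(\mathcal A)\cap D_n(\mathcal B)$, equals the first sum $\sum_{i=1}^q(a|A_i|+m)g_{a,m}(n-|B_i|+|A_i|,|A_i|)$.

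The crux is a counting observation forced by the biconditional $A_j\subseteq B_k \Leftrightarrow j=k$. If $X\in U_n(\mathcal A)\cap D_n(\mathcal B)$, choose indices with $A_i\subseteq X$ and $X\subseteq B_j$; then $A_i\subseteq B_j$, so $i=j$, and this index is in fact unique: if also $A_{i'}\subseteq X\subseteq B_{i'}$, then $A_{i'}\subseteq B_j$ forces $i'=j$. Hence the blocks $\{X : A_i\subseteq X\subseteq B_i\}$, $i=1,\dots,q$, are pairwise disjoint and their union is exactly $U_n(\mathcal A)\cap D_n(\mathcal B)$. The same argument pins down $Z_{\mathcal A}(X)$: for $X$ with $A_i\subseteq X\subseteq B_i$, any $A_{i'}\subseteq X$ satisfies $A_{i'}\subseteq B_i$, so $i'=i$; thus $A_i$ is the only member of $\mathcal A$ contained in $X$, whence $Z_{\mathcal A}(X)=A_i$ and $|Z_{\mathcal A}(X)|=|A_i|$ is constant on the $i$-th block.

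With this in hand the reduction is mechanical. On the $i$-th block the factor $a|Z_{\mathcal A}(X)|+m=a|A_i|+m$ pulls out, leaving $\sum_{A_i\subseteq X\subseteq B_i}g_{a,m}(n,|X|)$, which Lemma~\ref{lm_pre_main2} evaluates to $g_{a,m}(n-|B_i|+|A_i|,|A_i|)$ (its hypothesis $A_i\subseteq B_i$ holds by taking $j=k=i$ in the biconditional, and $ak+m\neq 0$ on $|A_i|\leq k\leq n$ follows from $|A_i|\geq\min_{A\in\mathcal A}|A|$). Summing over the $q$ disjoint blocks identifies $\sum_{X\in U_n(\mathcal A)\cap D_n(\mathcal B)}(a|Z_{\mathcal A}(X)|+m)g_{a,m}(n,|X|)$ with the first sum in (7). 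Since $U_n(\mathcal A)$ is the disjoint union of $U_n(\mathcal A)\cap D_n(\mathcal B)$ and $U_n(\mathcal A)\setminus D_n(\mathcal B)$, equation (7) is precisely the split of the Theorem~\ref{thm_main1} identity according to whether $X$ lies in $D_n(\mathcal B)$.

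The only real obstacle is the partition-and-constancy step of the second paragraph; everything else is bookkeeping. An alternative would mirror the proof of Theorem~\ref{thm_main1}: induct on $q$, decompose $\mathcal A=\mathcal A_1\cup\mathcal A_2$ and $\mathcal B=\mathcal B_1\cup\mathcal B_2$, and invoke Lemma~\ref{lm_Thu2} to separate the $U_n(\mathcal A)\setminus D_n(\mathcal B)$ sum into pieces — which is presumably why that lemma is recorded — but the direct reduction above is shorter and sidesteps verifying the disjointness hypotheses $U_n(\mathcal A_1)\cap D_n(\mathcal B_2)=\varnothing=U_n(\mathcal A_2)\cap D_n(\mathcal B_1)$ required by Lemma~\ref{lm_Thu2}.
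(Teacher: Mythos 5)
Your proof is correct, but it takes a genuinely different route from the paper's. The paper proves (7) by induction on $q$: its base case $q=1$ is precisely your argument (split the Theorem~\ref{thm_main1} identity over $U_n(\mathcal A)\cap D_n(\mathcal B)$ and its complement, then evaluate the former by Lemma~\ref{lm_pre_main2}), and for $q>1$ it sets $\mathcal A_1=\{A_1,\dots,A_{q-1}\}$, $\mathcal A_2=\{A_q\}$, $\mathcal B_1=\{B_1,\dots,B_{q-1}\}$, $\mathcal B_2=\{B_q\}$, checks $U_n(\mathcal A_1)\cap D_n(\mathcal B_2)=\varnothing=U_n(\mathcal A_2)\cap D_n(\mathcal B_1)$, invokes Lemma~\ref{lm_Thu2} to split the sum over $U_n(\mathcal A)\setminus D_n(\mathcal B)$ into two pieces handled by the induction hypothesis, and kills the cross term
\begin{equation}
\sum_{X\in U_{n}(\mathcal A_1\vee \mathcal A_2)} \left(a\vert Z_{\mathcal A_1\vee \mathcal A_2}(X)\vert+m\right)g_{a,m}(n,\vert X\vert)=1\notag
\end{equation}
by another application of Theorem~\ref{thm_main1}. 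What you do instead is extend the base-case argument to all $q$ at once: the biconditional $A_j\subseteq B_k\Leftrightarrow j=k$ forces $U_n(\mathcal A)\cap D_n(\mathcal B)$ to be the \emph{disjoint} union of the intervals $\{X:A_i\subseteq X\subseteq B_i\}$ and forces $Z_{\mathcal A}(X)=A_i$ on the $i$-th interval, after which Lemma~\ref{lm_pre_main2} finishes each block. Your justifications of these partition-and-constancy claims (uniqueness of the index, $A_i$ being the sole member of $\mathcal A$ inside $X$, and the hypotheses of Lemma~\ref{lm_pre_main2}) are all sound. Your route buys brevity and self-containment: no induction on $q$ and no appeal to Lemma~\ref{lm_Thu2} at all, so the structural content of the theorem (that the first sum in (7) is just the $D_n(\mathcal B)$-part of the Theorem~\ref{thm_main1} identity, summed interval by interval) is laid bare. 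The paper's route buys uniformity: it reuses the same decomposition machinery (Lemma~\ref{lm_set_thu} for Theorem~\ref{thm_main1}, Lemma~\ref{lm_Thu2} here) in both main theorems, mirroring Thu's original treatment, at the cost of an extra lemma and a less transparent cancellation.
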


\begin{proof} {\bf Case 1}. Suppose $q=1$. Then $\mathcal A=\{A_1\}$, $\mathcal B=\{B_1\}$, $\varnothing\neq A_1\subseteq B_1$, and $a\vert A_1\vert +m\neq 0$. Furthermore if $X\in U_n(\mathcal A)$, then $Z_{\mathcal A}(X)=A_1$. By Theorem \ref{thm_main1}, 
\begin{equation}
\sum_{X\in U_n(\mathcal A)\cap D_n(\mathcal B)} \left(a\vert Z_{\mathcal A}(X)\vert+m\right)g_{a,m}(n,\vert X\vert)+\sum_{X\in U_n(\mathcal A)\setminus D_n(\mathcal B)} \left(a\vert Z_{\mathcal A}(X)\vert+m\right)g_{a,m}(n,\vert X\vert)=1.\notag
\end{equation}
 Note that by Lemma \ref{lm_pre_main2}
 \begin{align}
 \sum_{X\in U_n(\mathcal A)\cap D_n(\mathcal B)} \left(a\vert Z_{\mathcal A}(X)\vert+m\right)g_{a,m}(n,\vert X\vert) &=(a\vert A_1\vert+m)\sum_{A\subseteq X\subseteq B} g_{a,m}(n,\vert X\vert)\notag\\
 &=(a\vert A_1\vert+m)g_{a,m}(n-\vert B_1\vert+\vert A_1\vert,\vert A_1\vert).\notag
 \end{align}
 Hence the theorem holds.

{\bf Case 2}. Suppose $q>1$. Assume that the theorem holds for all $q'$ with $1\leq q'<q$. Let 
\begin{align}
\mathcal A_1 &=\{A_1,\dots, A_{q-1}\}, & \mathcal A_2 &=\{A_q\},\notag\\
\mathcal B_1 &=\{B_1,\dots, B_{q-1}\}, & \mathcal B_2 &=\{B_q\}.\notag
\end{align}
Note that $U_n(\mathcal A_1)\cap D_n(\mathcal B_2)=\varnothing=U_n(\mathcal A_2)\cap D_n(\mathcal B_1)$. By Lemma \ref{lm_Thu2} and induction,
\begin{align}
\sum_{X\in U_{n}(\mathcal A)\setminus D_n(\mathcal B)} \frac{a\vert Z_{\mathcal A}(X)\vert+m}{F(X)} &=\left(1-\sum_{i=1}^{q-1} (a\vert A_i\vert+m)g_{a,m}(n-\vert B_i\vert+\vert A_i\vert,\vert A_i\vert) \right)\notag\\
&\hskip 0.5cm +\left(1- (a\vert A_q\vert+m)g_{a,m}(n-\vert B_q\vert+\vert A_q\vert,\vert A_q\vert) \right)\notag\\
&\hskip 1cm -\sum_{X\in U_{n}(\mathcal A_1\vee \mathcal A_1)} \left(a\vert Z_{\mathcal A_1\vee \mathcal A_2}(X)\vert+m\right)g_{a,m}(n,\vert X\vert).\notag
\end{align}
Note that by Theorem \ref{thm_main1}, the $\sum_{X\in U_{n}(\mathcal A_1\vee \mathcal A_1)} \left(a\vert Z_{\mathcal A_1\vee \mathcal A_2}(X)\vert+m\right)g_{a,m}(n,\vert X\vert)=1$. Hence the theorem holds.
\end{proof}

Note that by  Lemma \ref{lm_special_case}, equations (3) and (5) are consequence of Theorem \ref{thm_main2}.

\end{document}